\newcommand{\refsec}[1] {Section~\ref{#1}}
\newcommand{\refeq}[1] {(\ref{#1})}
\newcommand{\refeqeq}[2] {(\ref{#1})--(\ref{#2})}
\newcommand{\reffig}[1] {Figure~\ref{#1}}
\renewcommand{\vec}[1] {\boldsymbol{#1}}
\newcommand{\mat}[1] {\boldsymbol{\mathsf{#1}}}
\newcommand{\diff}{\mathrm{d}}
\newcommand{\norm}[1] {\left\|#1\right\|}
\newcommand{\keywords}[1]{\par\addvspace\baselineskip
\noindent\keywordname\enspace\ignorespaces#1}
\newcommand{\R}{\mathbb{R}}
\newcommand{\E}{\mathbb{E}}
\newcommand{\be}{\vec{e}}
\newcommand{\br}{\vec{r}}
\newcommand{\bd}{\vec{d}}
\newcommand{\bk}{\vec{\kappa}}
\newcommand{\bm}{\vec{m}}
\newcommand{\bn}{\vec{n}}
\newcommand{\bff}{\vec{f}}
\newcommand{\bw}{\vec{\omega}}
\newcommand{\bv}{\vec{v}}
\newcommand{\bnu}{\vec{\nu}}
\newcommand{\bJ}{\vec{J}}
\newcommand{\bl}{\vec{L}}
\newcommand{\bh}{\vec{h}}
\newcommand{\bF}{\vec{F}}
\newcommand{\bp}{\vec{p}}
\newcommand{\bq}{\vec{q}}
\newcommand{\bPhi}{\vec{\Phi}}
\begin{document}

\mainmatter

\title{On the General Analytical Solution of the Kinematic Cosserat Equations}
\titlerunning{On the General Analytical Solution of the Kinematic Cosserat Equations}

\author{Dominik~L.~Michels\inst{1
}
\and Dmitry~A.~Lyakhov\inst{2} \and Vladimir~P.~Gerdt\inst{3} \and Zahid~Hossain\inst{1,4} \and Ingmar~H.~Riedel-Kruse\inst{4} \and Andreas~G.~Weber\inst{5}}
\institute{Department of Computer Science, Stanford University, 353 Serra Mall, Stanford, CA 94305, USA\\
\email{michels@cs.stanford.edu}\\
\and
Visual Computing Center, King Abdullah University of Science and Technology, Al Khawarizmi Building, Thuwal, 23955-6900, Saudi Arabia\\
\email{dmitry.lyakhov@kaust.edu.sa}\\
\and
Group of Algebraic and Quantum Computations, Joint Institute for Nuclear Research, Joliot-Curie 6, 141980 Dubna, Moscow Region, Russia\\
\email{gerdt@jinr.ru}
\and
Department of Bioengineering, Stanford University, 318 Campus Drive, Stanford, CA 94305, USA\\
\email{ingmar@stanford.edu}, \email{zhossain@stanford.edu}
\and
Institute of Computer Science II, University of Bonn, Friedrich-Ebert-Allee 144, 53113 Bonn, Germany\\
\email{weber@cs.uni-bonn.de}}

\authorrunning{D.~L.~Michels et al.}
\toctitle{Lecture Notes in Computer Science}
\tocauthor{D.~L.~Michels et al.}


\maketitle

\begin{abstract}
Based on a Lie symmetry analysis, we construct a closed form solution to the kinematic part of the (partial differential) Cosserat equations describing the mechanical behavior of elastic rods. The solution depends on two arbitrary analytical vector functions and is analytical everywhere except a certain domain of the independent variables in which one of the arbitrary vector functions satisfies 
a simple explicitly given algebraic relation. As our main theoretical result, in addition to the construction of the solution, we proof its generality. Based on this observation, a hybrid semi-analytical solver for highly viscous two-way coupled fluid-rod problems is developed which allows for the interactive high-fidelity simulations of flagellated microswimmers as a result of a substantial reduction of the numerical stiffness.
\keywords{Cosserat Rods, Differential Thomas Decomposition, Flagellated Microswimmers, General Analytical Solution, Kinematic Equations, Lie Symmetry Analysis, Stokes Flow, Symbolic Computation.}
\end{abstract}

\section{Introduction}
\label{sec:1}

Studying the dynamics of nearly one-dimensional structures has various scientific and industrial applications, for example in biophysics (cf.~\cite{Elgeti:2015,Goldstein:2015} and the references therein) and visual computing (cf.~\cite{MMS:2015}) as well as in civil and mechanical engineering (cf.~\cite{Boyer:2011}), microelectronics and robotics (cf.~\cite{CaoTucker:2008}). In this regard, an appropriate description of the dynamical behavior of flexible one-dimensional structures is provided by the so-called special Cosserat theory of elastic rods (cf.~\cite{Antman:1995}, Ch.~8, and the original work \cite{Cosserat:1909}). This is a general and geometrically exact dynamical model that takes bending, extension, shear, and torsion into account as well as rod deformations under external forces and torques. In this context, the dynamics of a rod is described by a governing system of twelve first-order nonlinear partial differential equations (PDEs) with a pair of independent variables $(s,t)$ where $s$ is the arc-length and $t$ the time parameter. In this PDE system, the two kinematic vector equations ((9a)--(9b) in~\cite{Antman:1995}, Ch.~8) are parameter free and represent the compatibility conditions for four vector functions $\bk,\bw,\bnu,\bv$ in $(s,t)$. Whereas the first vector equation only contains two vector functions $\bk,\bw$, the second one contains all four vector functions $\bk,\bw,\bnu,\bv$. The remaining two vector equations in the governing system are dynamical equations of motion and include two more dependent vector variables $\hat{\bm}(s,t)$ and $\hat{\bn}(s,t)$. Moreover, these dynamical equations contain parameters (or parametric functions of $s$) to characterize the rod and to include the external forces and torques.

Because of its inherent stiffness caused by the different deformation modes of a Cosserat rod, a pure numerical treatment of the full Cosserat PDE system requires the application of specific solvers; see e.g.~\cite{LangLinnArnold:2011,MLGSW:2015}. In order to reduce the computational overhead caused by the stiffness, we analyzed the Lie symmetries of the first kinematic vector equation ((9a) in~\cite{Antman:1995}, Ch.~8) and constructed its general and (locally) analytical solution in \cite{MLGSW:2014} which depends on three arbitrary functions in $(s,t)$ and three arbitrary functions in $t$.

In this contribution we perform a computer algebra-based Lie symmetry analysis to integrate the full kinematic part of the governing Cosserat system based on our previous work in \cite{MLGSW:2014}. This allows for the construction of a general analytical solution of this part which depends on six arbitrary functions in $(s,t)$. We prove its generality and apply the obtained analytical solution in order to solve the dynamical part of the governing system. Finally, we prove its practicability by simulating the dynamics of a flagellated microswimmer. To allow for an efficient solution process of the determining equations for the infinitesimal Lie symmetry generators, we make use of the Maple package {\sc SADE} (cf.~\cite{SADE:2011}) in addition to {\sc Desolv} (cf.~\cite{CarminatiVu:2000}).

This paper is organized as follows. \refsec{sec:2} describes the governing PDE system in the special Cosserat theory of rods. In \refsec{sec:3}, we show that the functional arbitrariness in the analytical solution to the first kinematic vector equation that we constructed in \cite{MLGSW:2014} can be narrowed down to three arbitrary bivariate functions. Our main theoretical result is presented in \refsec{sec:4}, in which we construct a general analytical solution to the kinematic part of the governing equations by integrating the Lie equations for a one-parameter subgroup of the Lie symmetry group. \refsec{sec:5} illustrates the practicability of this approach by realizing a semi-analytical simulation of a flagellated microswimmer. This is based on a combination of the analytical solution of the kinematic part of the Cosserat PDE and a numerical solution of its dynamical part. Some concluding remarks are given in \refsec{sec:6} and limitations are discussed in \refsec{sec:7}.

\section{Special Cosserat Theory of Rods}
\label{sec:2}

In the context of the special Cosserat theory of rods (cf.~\cite{Antman:1995,CaoTucker:2008,Cosserat:1909,MLGSW:2014}), the motion of a rod is defined by a vector-valued function
\[
   [a,b]\times \R \ni (s,t) \mapsto \left(\br(s,t),\,\bd_1(s,t),\,\bd_2(s,t)\right)\in \E^3\,. \label{rd1d2}
\]
Here, $t$ denotes the time and $s$ is the arc-length parameter identifying a {\em material cross-section} of the rod which consists of all material points whose reference positions are on the plane perpendicular to the rod at $s$. Moreover, $\bd_1(s,t)$ and $\bd_2(s,t)$ are orthonormal vectors, and $\br(s,t)$ denotes the position of the material point on the centerline with arc-length parameter $s$ at time $t$. The Euclidean 3-space is denoted with $\E^3$. The vectors $\bd_1,\,\bd_2$, and $\bd_3:=\bd_1\times \bd_2$ are called {\em directors} and form a right-handed orthonormal moving frame. The use of the triple $(\bd_1,\,\bd_2,\,\bd_3)$ is natural for the intrinsic description of the rod deformation whereas $\br$ describes the motion of the rod relative to the fixed frame $(\be_1,\,\be_2,\,\be_3)$. This is illustrated in \reffig{Fig1}.

From the orthonormality of the directors follows the existence of so-called {\em Darboux} and {\em twist} vector functions $\bk=\sum_{k=1}^3\kappa_k\bd_k$ and $\bw=\sum_{k=1}^3\omega_k\bd_k$ determined by the kinematic relations
\begin{equation}
         \partial_s\bd_k=\bk\times \bd_k\,,\quad \partial_t\bd_k=\bw\times \bd_k\,. \label{kr}
\end{equation}
The {\em linear strain} of the rod and the {\em velocity of the material cross-section} are given by vector functions $\bnu:=\partial_s\br=\sum_{k=1}^3\nu_k\bd_k$ and $\bv:=\partial_t\br=\sum_{k=1}^3v_k\bd_k$.

\contourlength{1.6pt}
\newcommand{\renderScale}{0.12}
\newcommand{\outlineScale}{1.25*\renderScale}
\newcommand{\pxToMetric}{0.0352777778}
\newcommand{\horizontalCrop}{0.3}
\newcommand{\imageWidth}{1920}
\newcommand{\imageHeight}{1080}

\begin{figure}
\centering
\begin{tikzpicture}
\begin{scope}[scale=10*\pxToMetric*\renderScale]
\coordinate (d1) at (609.500mm, 393.654mm);
\coordinate (d2) at (451.588mm, 30.955mm);
\coordinate (d3) at (839.662mm, 92.118mm);
\coordinate (o1) at (631.054mm, 174.816mm);
\coordinate (ax) at (887.632mm, 543.492mm);
\coordinate (ay) at (1369.466mm, 541.844mm);
\coordinate (az) at (1126.066mm, 941.984mm);
\coordinate (o2) at (1118.542mm, 639.549mm);
\coordinate (r) at (1495.859mm, 221.154mm);
\coordinate (s0) at (608.083mm, 533.215mm);
\coordinate (sl) at (1832.694mm, 552.899mm);
\end{scope}

\tikzset{
  double -latex/.style args={#1 colored by #2 and #3}{
    -latex,line width=#1,#2,
    postaction={draw,-latex,#3,line width=(#1)/3,shorten <=(#1)/4,shorten >=4.5*(#1)/3},
  },
  double round cap-latex/.style args={#1 colored by #2 and #3}{
    round cap-latex,line width=#1,#2,
    postaction={draw,round cap-latex,#3,line width=(#1)/3,shorten <=(#1)/4,shorten >=4.5*(#1)/3},
  },
  double round cap-stealth/.style args={#1 colored by #2 and #3}{
    round cap-stealth,line width=#1,#2,
    postaction={round cap-stealth,draw,,#3,line width=(#1)/3,shorten <=(#1)/3,shorten >=2*(#1)/3},
  },
  double -stealth/.style args={#1 colored by #2 and #3}{
    -stealth,line width=#1,#2,
    postaction={-stealth,draw,,#3,line width=(#1)/3,shorten <=(#1)/3,shorten >=2*(#1)/3},
  },
  double -triangle 45/.style args={#1 colored by #2 and #3}{
    -triangle 45,line width=#1,#2,
    postaction={-triangle 45,draw,,#3,line width=(#1)/3,shorten <=(#1)/3,shorten >=3.5*(#1)/2},
  },
}

\path[use as bounding box] (\horizontalCrop, 0) rectangle (\renderScale*\pxToMetric*\imageWidth - \horizontalCrop, \renderScale*\pxToMetric*\imageHeight);

\node[scale=\outlineScale, anchor=south west, inner sep=0pt] at (0, 0) {\includegraphics{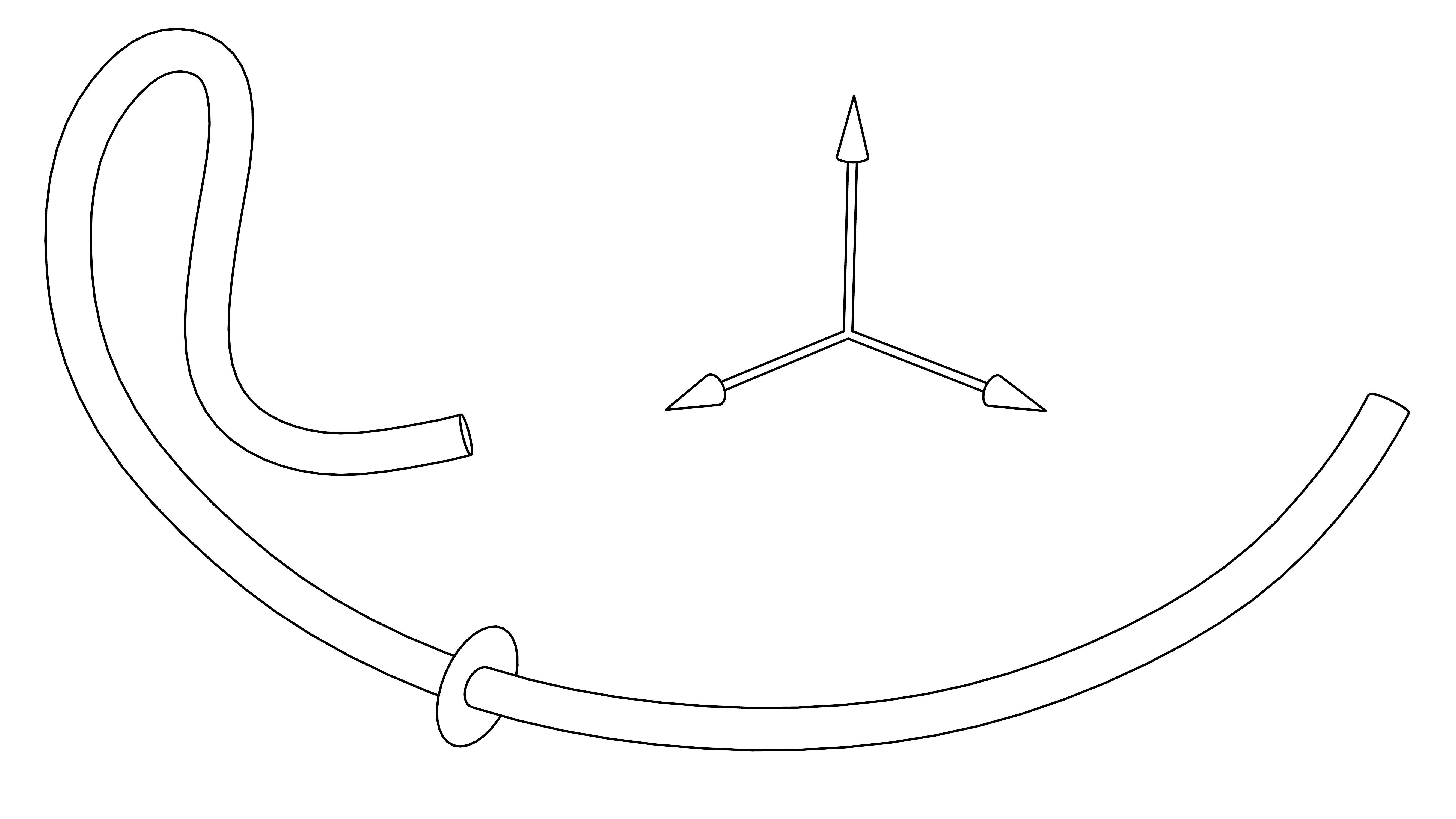}};

\draw[double -latex=2pt colored by black and white] (o2) -- (r) node[black, pos=0.65, right, inner sep=5pt, xshift=0pt, yshift=0pt] {\contour{white}{$\boldsymbol{r}(s, t)$}};

\draw[white, thick, fill=black] (o2) circle (0.1);

\draw[double -latex=2pt colored by black and white] (o1) -- (d1) node[black, anchor=west, inner sep=2pt, xshift=0pt, yshift=0pt] {\contour{white}{$\boldsymbol{d_1}$}};
\draw[double -latex=2pt colored by black and white] (o1) -- (d2) node[black, anchor=south east, inner sep=0pt, xshift=0pt, yshift=0pt] {\contour{white}{$\boldsymbol{d_2}$}};
\draw[double -latex=2pt colored by black and white] (o1) -- (d3) node[black, anchor=west, inner sep=0pt, xshift=0pt, yshift=0pt] {\contour{white}{$\boldsymbol{d_3}$}};

\node [black, anchor=north east, inner sep=2pt, xshift=0pt, yshift=0pt] at (ax) {\contour{white}{$\boldsymbol{e}_1$}};
\node [black, anchor=north west, inner sep=2pt, xshift=0pt, yshift=0pt] at (ay) {\contour{white}{$\boldsymbol{e}_2$}};
\node [black, anchor=south, inner sep=4pt, xshift=0pt, yshift=0pt] at (az) {\contour{white}{$\boldsymbol{e}_3$}};

\node [black, anchor=south, inner sep=3pt, xshift=0pt, yshift=0pt] at (s0) {\contour{white}{$s = a$}};
\node [black, anchor=south, inner sep=3pt, xshift=0pt, yshift=0pt] at (sl) {\contour{white}{$s = b$}};

\end{tikzpicture}
\caption{The vector set $\{\vec{d}_1,\vec{d}_2,\vec{d}_3\}$ forms a right-handed orthonormal basis. The directors $\vec{d}_1$ and $\vec{d}_2$ span the local material cross-section, whereas $\vec{d}_3$ is perpendicular to the cross-section. Note that in the presence of shear deformations  $\vec{d}_3$ is unequal to the tangent $\partial_s\vec{r}$ of the centerline of the rod.}
\label{Fig1}
\end{figure}

\noindent
The components of the {\em strain variables} $\bk$ and $\bnu$ describe the deformation of the rod: the flexure with respect to the two major axes of the cross-section $(\kappa_1,\kappa_2)$, torsion $(\kappa_3)$, shear $(\nu_1,\nu_2)$, and extension $(\nu_3)$.

The triples
\begin{equation}
\bk=(\kappa_1,\kappa_2,\kappa_3)\,,\ \bw=(\omega_1,\omega_2,\omega_3)\,,\ \bnu=(\nu_1,\nu_2,\nu_3)\,,\ \bv=(v_1,v_2,v_3)          \label{kv}
\end{equation}
are functions in $(s,t)$, that satisfy the {\em compatibility conditions}
\begin{equation}
\partial_t\partial_s\bd_k=\partial_s\partial_t\bd_k\,,\quad \partial_t\partial_s\br=\partial_s\partial_t\br\,. \label{cc}
\end{equation}
The substitution of~\eqref{kr} into the left equation in~\eqref{cc} leads to
\[
  \tilde{\bk}_t=\tilde{\bw}_s-\bk\times \bw\quad \text{with}\quad \tilde{\bk}_t=\partial_t\sum_{k=1}^3\kappa_k\bd_k\,,\ \tilde{\bw}_s=\partial_s\sum_{k=1}^3\omega_k\bd_k\,.
\]
On the other hand one obtains, $\tilde{\bk}_t=\bk_t+\bw\times\bk$
 and $\tilde{\bw}_s=\bw_s+\bk\times\bw$ with $\bk_t=(\partial_t\kappa_1,\partial_t\kappa_2,\partial_t\kappa_3)$ and $\bw_s=(\partial_s\omega_1,\partial_s\omega_2,\partial_s\omega_3)$, and therefore
\begin{equation}
  \bk_t=\bw_s - \bw\times \bk\,. \label{ke1}
\end{equation}
Similarly, the second compatibility condition in~\eqref{cc} is equivalent to
\begin{equation}
   \bnu_t=\bv_s+\bk\times\bv-\bw\times \bnu \label{ke2}
\end{equation}
with $\bnu_t=(\partial_t\nu_1,\partial_t\nu_2,\partial_t\nu_3)$ and $\bv_s=(\partial_sv_1,\partial_sv_2,\partial_sv_3)$.

The first-order PDE system~\refeq{ke1}--\refeq{ke2} with independent variables $(s,t)$ and dependent variables \refeq{kv} forms the kinematic part of the governing Cosserat equations ((9a)--(9b) in~\cite{Antman:1995}, Ch.~8). The construction of its general solution is the main theoretical result of this paper.

The remaining part of the governing equations in the special Cosserat theory consists of two vector equations resulting from Newton's laws of motion. For a rod density $\rho(s)$ and cross-section $A(s)$, these equations are given by
\begin{eqnarray*}
\begin{array}{l}
\rho(s) A(s)\partial_t\bv=\partial_s\bn(s,t)+\bF(s,t)\,,\\[0.2cm]
\partial_t\bh(s,t)=\partial_s\bm(s,t)+\bnu(s,t)\times \bn(s,t)+\bl(s,t)\,,
\end{array}
\label{nl}
\end{eqnarray*}
where $\bm(s,t)=\sum_{k=1}^3m_k(s,t)\,\bd_k(s,t)$ are the {\em contact torques},  $\bn(s,t)=\sum_{k=1}^3n_k(s,t)\,\bd_k(s,t)$ are the {\em contact forces}, $\bh(s,t)=\sum_{k=1}^3h_k(s,t)\,\bd_k(s,t)$ are the {\em angular momenta}, and $\bF(s,t)$ and $\bl(s,t)$ are the  {\em external forces} and {\em torque densities}.

The contact torques $\bm(s,t)$ and contact forces $\bn(s,t)$ corresponding to the {\em internal stresses}, are related to the extension and shear strains $\bnu(s,t)$ as well as to the flexure and torsion strains $\bk(s,t)$ by the
{\em constitutive relations}
\begin{equation} \label{cons_rel}
 \bm(s,t)=\hat{\bm}\left(\bk(s,t),\bnu(s,t),s\right)\,,\quad \bn(s,t)=\hat{\bn}\left(\bk(s,t),\bnu(s,t),s\right)\,.
\end{equation}
Under certain reasonable assumptions (cf.~\cite{Antman:1995,CaoTucker:2008,MLGSW:2014}) on the structure of the right-hand sides of \eqref{cons_rel}, together with the kinematic relations~\eqref{ke1} and~\eqref{ke2}, it yields to the governing equations (cf.~\cite{Antman:1995}, Ch.~8, (9.5a)--(9.5d))
\begin{eqnarray}
\begin{array}{l}
\bk_t=\bw_s - \bw\times \bk\,,\\[0.1cm]
\bnu_t=\bv_s+\bk\times\bv-\bw\times \bnu\,,\\[0.15cm]
\rho\bJ\cdot \bw_t=\hat{\bm}_s+\bk\times\hat{\bm}+\bnu\times \hat{\bn}-\bw\times(\rho\bJ\cdot\bw)+\bl\,,\\[0.15cm]
\rho A\bv_t=\bn_s+\bk\times \hat{\bn}-\bw\times (\rho A\bv)+\bF\,,
\end{array}
\label{sct}
\end{eqnarray}
in which $\bJ$ is the inertia tensor of the cross-section per unit length. The dynamical part of~\eqref{sct} contains parameters characterizing the rod under consideration of $\rho,A,\bJ$ and the external force and torque densities $\bF$ and $\bl$, whereas the kinematic part is parameter free.

\section{Analytical Form of the Darboux and Twist Functions}
\label{sec:3}

In \cite{MLGSW:2014}, we constructed a general solution to \eqref{ke1} that is the first equation in the PDE system~\eqref{sct}. In so doing, we proved that the constructed solution is (locally) analytical and provides the structure of the twist vector function $\bw$ and the Darboux vector function $\bk$:
\begin{equation}\label{gen_sol}
\begin{split}
\vec{\omega} =& \vec{f} - \frac{\sin(p)}{p}\,\vec{p}\times \vec{f} + \frac{1-\cos(p)}{p^2}\left(\vec{p}\,(\vec{p}\cdot \vec{f})-p^2\,\vec{f}\right)+\\
  & + {\vec{p}_t}+\frac{p-\sin(p)}{p^3}\,\left(\vec{p}\,\left(\vec{p}\cdot {\vec{p}_t}\right)-p^2\,{ \vec{p}_t}\right)-\frac{1-\cos(p)}{p^2}\,\vec{p}\times {\vec{p}_t}\,,\\
\vec{\kappa} = & {\vec{p}_s}
  +\frac{p-\sin(p)}{p^3}\,\left(\vec{p}\,\left(\vec{p}\,{\cdot \vec{p}_s}\right)-p^2\,{\vec{p}_s}\right)-\frac{1-\cos(p)}{p^2}\,\vec{p}\times {\vec{p}_s}\,,
\end{split}
\end{equation}
where $\bff:=\left(f_1(t),f_2(t),f_3(t)\right)$ and $\bp:=\left(p_1(s,t),p_2(s,t),p_3(s,t)\right)$ are arbitrary vector-valued analytical functions, and $p^2:=p_1^2+p_2^2+p_3^2$.

It turns out that the functional arbitrariness of $\bff$ and $\bp$ is superfluous, and that \eqref{gen_sol} with $\bff(t)=0$ is still a general solution to \eqref{ke1}. This fact is formulated in the following proposition.
\begin{proposition}
\label{pro:1}
The vector functions $\vec{\omega}$ and $\vec{\kappa}$ expressed by
\begin{subequations}
\begin{align}
\vec{\omega} =& {\vec{p}_t}+\frac{p-\sin(p)}{p^3}\,\left(\vec{p}\,\left(\vec{p}\cdot {\vec{p}_t}\right)-p^2\,{\vec{p}_t}\right)-\frac{1-\cos(p)}{p^2}\,\vec{p}\times {\vec{p}_t}\,,\label{omega}\\
\vec{\kappa} = & {\vec{p}_s}
  +\frac{p-\sin(p)}{p^3}\,\left(\vec{p}\,\left(\vec{p}\cdot{\vec{p}_s}\right)-p^2\,{\vec{p}_s}\right)-\frac{1-\cos(p)}{p^2}\,\vec{p}\times {\vec{p}_s}\,\label{kappa}
\end{align}
\end{subequations}
with an arbitrary analytical vector function $\bp(s,t)$, are a general analytical solution to \eqref{ke1}.
\end{proposition}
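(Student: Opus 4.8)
I would argue in two steps: first that \refeqeq{omega}{kappa} is an analytic solution of \refeq{ke1}, which costs essentially nothing, and then that it is \emph{general}, by showing that the extra arbitrary function $\bff$ of the already-known solution \refeq{gen_sol} can be absorbed into $\bp$.

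\emph{That it solves \refeq{ke1}.} This is immediate: \refeqeq{omega}{kappa} is obtained from \refeq{gen_sol} by setting $\bff=0$, and every coefficient $\tfrac{p-\sin p}{p^{3}}$, $\tfrac{1-\cos p}{p^{2}}$ is an entire function of $p^{2}=p_1^2+p_2^2+p_3^2$, hence analytic wherever $\bp$ is. A more structural way to see it, which also drives the generality argument, is to introduce the rotation $\mat{R}(s,t):=\exp\bigl(\widehat{\bp}(s,t)\bigr)\in\mathrm{SO}(3)$, where $\widehat{\vec{a}}$ is the skew matrix with $\widehat{\vec{a}}\,\vec{x}=\vec{a}\times\vec{x}$: the classical $\mathrm{dexp}$ identity for $\mathrm{SO}(3)$ says precisely that $\mat{R}^{-1}\partial_s\mat{R}=\widehat{\bk}$ and $\mat{R}^{-1}\partial_t\mat{R}=\widehat{\bw}$ with $\bk,\bw$ as in \refeqeq{omega}{kappa}, and then \refeq{ke1} is exactly the zero-curvature identity $\partial_t\bigl(\mat{R}^{-1}\partial_s\mat{R}\bigr)-\partial_s\bigl(\mat{R}^{-1}\partial_t\mat{R}\bigr)=\bigl[\mat{R}^{-1}\partial_s\mat{R},\,\mat{R}^{-1}\partial_t\mat{R}\bigr]$ read off through $[\widehat{\vec{a}},\widehat{\vec{b}}]=\widehat{\vec{a}\times\vec{b}}$.

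\emph{Generality.} By the result of \cite{MLGSW:2014}, every (locally) analytic solution of \refeq{ke1} equals \refeq{gen_sol} for some analytic $\bff(t)$, $\bp(s,t)$, so it suffices to reproduce an arbitrary such pair from \refeqeq{omega}{kappa}. A short Rodrigues computation identifies the pair $(\bw,\bk)$ of \refeq{gen_sol} with the one determined by $\mat{Q}^{-1}\partial_s\mat{Q}=\widehat{\bk}$, $\mat{Q}^{-1}\partial_t\mat{Q}=\widehat{\bw}$ for the frame $\mat{Q}(s,t):=\mat{S}(t)\exp\bigl(\widehat{\bp}(s,t)\bigr)$, where $\mat{S}(t)\in\mathrm{SO}(3)$ is the analytic solution of $\dot{\mat{S}}=\mat{S}\,\widehat{\bff}$, $\mat{S}(t_0)=\mat{I}$ — indeed the $\bff$-terms of \refeq{gen_sol} are exactly $\mat{R}^{-1}\bff$, the axial vector of the conjugate $\mat{R}^{-1}\bigl(\mat{S}^{-1}\dot{\mat{S}}\bigr)\mat{R}$. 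Since $\exp$ restricts to an analytic diffeomorphism from $\{\vec{x}\in\R^{3}:\abs{\vec{x}}<\pi\}$ onto the rotations of angle $\neq\pi$, and since left multiplication by a constant rotation changes neither $\mat{Q}^{-1}\partial_s\mat{Q}$ nor $\mat{Q}^{-1}\partial_t\mat{Q}$, I may normalise $\mat{Q}$ by $\mat{Q}(s_0,t_0)^{-1}$ and set $\tilde{\bp}:=\exp^{-1}\bigl(\mat{Q}(s_0,t_0)^{-1}\mat{Q}\bigr)$, which is analytic on the region where the corresponding rotation angle stays below $\pi$ (a neighbourhood of $(s_0,t_0)$, and in practice a large, explicitly describable set). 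By construction $\exp(\widehat{\tilde{\bp}})$ obeys the same relations $\mat{Q}^{-1}\partial_s\mat{Q}=\widehat{\bk}$, $\mat{Q}^{-1}\partial_t\mat{Q}=\widehat{\bw}$, so \refeqeq{omega}{kappa} evaluated at $\tilde{\bp}$ returns precisely the solution \refeq{gen_sol} we started from. Hence the family \refeqeq{omega}{kappa} with arbitrary analytic $\bp$ is as general as \refeq{gen_sol}, and the set on which the above logarithm is unavailable is the exceptional domain alluded to in the abstract and pinned down in \refsec{sec:4}.

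\emph{Main obstacle.} The $\mathrm{dexp}$/Rodrigues bookkeeping matching \refeq{gen_sol} to $\mat{R}$ and to $\mat{Q}=\mat{S}\mat{R}$ is routine (and cleanest on unit quaternions, where $\mat{S}(t)\exp(\widehat{\bp})$ becomes an explicit algebraic product). The genuinely delicate step, which I expect to be the crux, is the last one: re-expressing the product of the $(s,t)$-rotation $\exp(\widehat{\bp})$ and the time-only rotation $\mat{S}(t)$ as a \emph{single} analytic exponential $\exp(\widehat{\tilde{\bp}})$ — this is exactly where the angle-$\pi$ obstruction appears — and then describing that exceptional set explicitly, so that the word ``general'' is justified off a provably small locus.
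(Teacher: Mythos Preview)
Your argument is correct, but it follows a genuinely different route from the paper's. The paper does \emph{not} reduce to the generality of~\eqref{gen_sol} and absorb $\bff$ via an $\mathrm{SO}(3)$ logarithm. Instead it argues directly: given analytic $\bw,\bk$ satisfying~\eqref{ke1}, it observes that the maps $\bp_t\mapsto\bw$ and $\bp_s\mapsto\bk$ in~\eqref{omega}--\eqref{kappa} are linear with common Jacobian determinant $J(\bp)=2\bigl(\cos p-1\bigr)/p^{2}$; picking any $\bp_0$ with $J(\bp_0)\neq 0$ one inverts to a first-order system $\bp_t=\bPhi(\bw,\bp)$, $\bp_s=\bPhi(\bk,\bp)$, checks that its integrability condition $\partial_s\bPhi(\bw,\bp)=\partial_t\bPhi(\bk,\bp)$ holds (this is where~\eqref{ke1} enters), and invokes Riquier's existence theorem to produce an analytic $\bp$ with $\bp(s_0,t_0)=\bp_0$.

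What each approach buys: your $\mathrm{SO}(3)$ argument is conceptually transparent---\eqref{ke1} is literally the zero-curvature identity for the frame $\mat{R}=\exp\widehat{\bp}$, and the ``absorb $\bff$'' step is just left-multiplying by a constant rotation and relogging---and it avoids any PDE existence machinery. The paper's argument is self-contained (it does not lean on the generality of~\eqref{gen_sol} from~\cite{MLGSW:2014}, which you use as a black box) and, as a by-product, produces the explicit Jacobian~$J(\bp)$ that pins down the exceptional locus as $\{p=2\pi n,\ n\ge 1\}$. Note in this connection that your ``angle~$\pi$'' obstruction is the cut locus of the logarithm you chose, not the paper's singular set; the two do not coincide, so your final remark identifying them with the domain ``pinned down in~\refsec{sec:4}'' is slightly off, though this does not affect the local generality claim.
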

\begin{proof}
Let $(s_0,t_0)$ be a fixed point. The right-hand sides of~\eqref{omega} and~\eqref{kappa} satisfy \eqref{ke1} for an arbitrary vector function $\bp(s,t)$ analytical in $(s_0,t_0)$. It is an obvious consequence of the fact that~\eqref{gen_sol} is a solution to~\eqref{ke1} for arbitrary $\bff(t)$ analytical in $t_0$.

Also, the equalities~\eqref{omega} and~\eqref{kappa} can be transformed into each other with
\begin{equation}\label{swap}
      \bw(s,t) \Leftrightarrow \bk(s,t)\ \ \text{and}\ \ \partial_s \Leftrightarrow \partial_t
\end{equation}
reflecting the invariance of~\eqref{ke1} under~\eqref{swap}. The equalities~\eqref{omega} and~\eqref{kappa} are linear with respect to the partial derivatives $\bp_t$ and $\bp_s$, and their corresponding Jacobians. The determinants of the Jacobian matrices $J_{\bw}(\partial_tp_1,\partial_tp_2,\partial_tp_3)$ and $J_{\bk}(\partial_sp_1,\partial_sp_2,
 \partial_sp_3)$ coincide because of the symmetry~\eqref{swap} and read\footnote{The equalities in~\eqref{Jacobian} are easily verifiable with Maple (cf.~\cite{MLGSW:2014}), Sec.~3.5.}
\begin{equation}
 J(\bp):=\det\left(J_{\bw}\right)=\det\left(J_{\bk}\right)=2\,\frac{\cos(p)-1}{p^2} \label{Jacobian}\,.
\end{equation}

Let $\bw(s,t)$ and $\bk(s,t)$ be two arbitrary vector functions analytical in $(s_0,t_0)$. We have to show that there is a vector function $\bp(s,t)$ analytical in $(s_0,t_0)$ satisfying~\eqref{omega} and~\eqref{kappa}. For that, chose real constants $a,b,c$ such that
\[
   \frac{\cos(\sqrt{a^2+b^2+c^2})-1}{a^2+b^2+c^2}\neq 0
\]
and set $\bp_0:=\{a,b,c\}$. Then \eqref{omega} and~\eqref{kappa} are solvable with respect to the partial derivatives of $\bp_t$ and $\bp_s$ in a vicinity of $(s_0,t_0)$, and we obtain the first-order PDE system of the form
\begin{equation}\label{psys}
 \bp_t=\bPhi(\bw,\bp)\,,\quad \bp_s=\bPhi(\bk,\bp)\,,
\end{equation}
where the vector function $\bPhi$ is linear in its first argument and analytical in $\bp$ at $\bp_0$.

Also, the system~\eqref{psys} inherits the symmetry under the swap~\eqref{swap} and is {\em passive} and {\em orthonomic} in the sense of the Riquer-Janet theory~(cf.~\cite{Seiler:2010} and the references therein), since its vector-valued {\em passivity (integrability) condition}
$$\partial_s\bPhi(\bw,\bp)-\partial_t\bPhi(\bk,\bp)=\vec{0}$$ holds due to symmetry. Therefore, by Riquier's existence theorems~\cite{Thomas:1929} that generalize the Cauchy-Kovalevskaya theorem, there is a {\em unique} solution $\bp(s,t)$ of~\eqref{psys} analytical in $(s_0,t_0)$ and satisfying $\bp(s_0,t_0)=\bp_0$.\\\phantom{}\hfill $\Box$
\end{proof}

\section{General Solution to the Kinematic Equation System}
\label{sec:4}

In this section, we determine a general analytical form of the vector functions $\bnu(s,t)$ and $\bv(s,t)$ in~\eqref{kv} describing the linear strain of a Cosserat rod and its velocity. These functions satisfy the second kinematic equation~\eqref{ke2} of the governing PDE system~\eqref{sct} under the condition that the Darboux and the twist functions, $\bk(s,t)$ and $\bw(s,t)$, occurring in the last equation, are given by \eqref{omega} and \eqref{kappa} which contain the arbitrary analytical vector function $\bp(s,t)$.

Similarly, as we carried it out in~\cite{MLGSW:2014} for the integration of~\eqref{ke1}, we analyze Lie symmetries  (cf.~\cite{Olivery:2010} and the references therein) and consider the {\em infinitesimal generator}
\begin{equation}
{\cal{X}}:=\xi^{1}{\partial_s}+\xi^{2}{\partial_t}+\sum_{i=1}^3\left(\theta^{i}{\partial_{ \omega_i}}+\vartheta^{i}{\partial_{\kappa_i}}+\phi^{i}{\partial_{\nu_i}}+\varphi^{i}{\partial_{v_i}}\right) \label{generator}
\end{equation}
of a Lie group of point symmetry transformations for~\eqref{ke1}--\eqref{ke2}. The coefficients $\xi^{1},\xi^{2},\theta^{i},\vartheta^{j},\phi^{m},\varphi^{n}$ with $i,j,m,n\in\{1,2,3\}$ in~\eqref{generator} are functions of the independent and dependent variables.

The {\em infinitesimal criterion of invariance} of~\eqref{ke1}--\eqref{ke2} reads
\begin{equation}
{\cal{X}}^{(pr)}\vec{h}_1={\cal{X}}^{(pr)}\vec{h}_2=0\quad \text{whenever}\quad \vec{h}_1=\vec{h}_2=0\,, \label{criterion}
\end{equation}
where
\begin{equation}\label{lhs}
\vec{h}_1:=\bk_t-\bw_s + \bw\times \bk\,,\quad \vec{h}_2:=\bnu_t-\bv_s-\bk\times\bv+\bw\times \bnu\,.
\end{equation}
In addition to those in \eqref{generator}, the {\em prolonged} infinitesimal symmetry generator $X^{(pr)}$ contains extra terms caused by the presence of the first-order partial derivatives in \eqref{ke1} and~\eqref{ke2}.

The invariance conditions~\eqref{criterion} lead to an overdetermined system of linear PDEs in the coefficients of the infinitesimal generator~\eqref{generator}. This {\em determining} system can be easily computed by any modern computer algebra software (cf.~\cite{Butcher:2003}). We make use of the Maple package {\sc Desolv} (cf.~\cite{CarminatiVu:2000}) which computes the determining system and outputs 138 PDEs.

Since the completion of the determining systems to involution is the most universal algorithmic tool of their analysis (cf.~\cite{Butcher:2003,Hereman:1996}), we apply the Maple package {\sc Janet} (cf.~\cite{BCGPR:03}) first and compute a Janet involutive basis (cf.~\cite{Robertz:2014}) of 263 elements for the determining system, which took about 80 minutes of computation time on standard hardware.\footnote{The computation time has been measured on a machine with an Intel(R) Xeon E5 with 3.5 GHz and 32 GB DDR-RAM.} Then we detected the functional arbitrariness in the general solution of the determining system by means of the differential Hilbert polynomial
\begin{equation}\label{HP}
   4s^2+18s+21=8\,\binom{s+2}{s}+6\,\binom{s+1}{s}+7
\end{equation}
computable by the corresponding routine of the Maple package {\sc DifferentialThomas} (cf.~\cite{BGLHR:2012}). It shows that the general solution depends on eight arbitrary functions of $(s,t)$.
However, in contrast to the determining system for~\eqref{ke1} which is quickly and effectively solvable (cf.~\cite{MLGSW:2014}) by the routine {\em pdesolv} built in the package {\sc Desolv}, the solution found by this routine to the involutive determining system for~\eqref{ke1}--\eqref{ke2} needs around one hour of computation time and has a form which is unsatisfactory for our purposes, since the solution contains nonlocal (integral) dependencies on arbitrary functions. On the other hand, the use of {\sc SADE} (cf.~\cite{SADE:2011}) leads to a satisfying result. Unlike {\sc Desolv}, {\sc SADE} uses some heuristics to solve simpler equations first in order to simplify the remaining system. In so doing, {\sc SADE} extends the determining systems with certain integrability conditions for a partial completion to involution. In our case the routine {\em liesymmetries} of {\sc SADE} receives components of the vectors in~\eqref{lhs} and outputs the set of nine distinct solutions in just a few seconds. The output solution set includes eight arbitrary functions in $(s,t)$ which is in agreement with~\eqref{HP}. Each solution represents an infinitesimal symmetry generator~\eqref{generator}.

Among the generators, there are three that include an arbitrary vector function, which we denoted by $\bq(s,t)=\left(q_1(s,t),q_2(s,t),q_3(s,t)\right)$, with vanishing coefficients $\theta_i,\vartheta_i$, $i\in\{1,2,3\}$. The sum of these generators is given by
\begin{eqnarray}\label{subgen}
&{\cal{X}}_0:=&\left(-\partial_sq_1+q_2\kappa_3-q_3\kappa_2\right)\partial_{\nu_1}+
\left(-\partial_sq_2+q_3\kappa_1-
    q_1\kappa_2\right)\partial_{\nu_2}+\nonumber\\
&&\left(-\partial_sq_3+q_1\kappa_3-q_3\kappa_1\right)\partial_{\nu_3}+
\left(-\partial_tq_1+q_2\omega_3-q_3\omega_2\right)\partial_{v_1}+\\
&&\left(-\partial_tq_2+q_3\omega_1-q_1\omega_2\right)\partial_{v_2}+
\left(-\partial_tq_3+q_1\omega_3-q_3\omega_1\right)\partial_{v_3}\,.\nonumber
\end{eqnarray}
It generates a one-parameter Lie symmetry group of point transformations (depending on the arbitrary vector function $\bq(s,t)$) of the vector functions $\bnu(s,t)$ and $\bv(s,t)$ preserving the equality~\eqref{ke2} for fixed $\bk(s,t)$ and $\bw(s,t)$.

In accordance to Lie's first fundamental theorem (cf.~\cite{Olivery:2010}), the symmetry transformations
\[
   \bnu \mapsto \bnu'(a)\,,\quad \bv \mapsto \bv'(a)\,\quad \text{with group parameter}\quad a\in \R\,,
\]
generated by~\eqref{subgen}, are solutions to the following differential (Lie) equations whose vector form reads
\begin{equation}\label{Lie_eqs}
\mathsf{d}_a\bnu'=\bq\times \bk-\bq_s\,,\quad \mathsf{d}_a\bv'=\bq\times \bw-\bq_t\,,\quad \bnu'(0)=\bnu, \quad \bv'(0)=\bv\,.
\end{equation}
The equations~\eqref{Lie_eqs} can easily be integrated, and without a loss of generality the group parameter can be absorbed into the arbitrary function $q$. This gives the following solution\footnote{It is easy to check with Maple that the right-hand sides of~\eqref{solke2} satisfy~\eqref{ke2} for arbitrary $\bq(s,t)$ if one takes \eqref{omega} and~\eqref{kappa} into account.} to~\eqref{ke2}:
\begin{equation}\label{solke2}
   \bnu=\bq\times\bk-\bq_s\,,\quad \bv=\bq\times\bw-\bq_t\,.
\end{equation}

\begin{proposition}
The vector functions $\bw(s,t)$, $\bk(s,t)$, $\bnu(s,t)$, and $\bv(s,t)$ expressed by \eqref{omega}--\eqref{kappa} and~\eqref{solke2} with two arbitrary analytical functions $\bp(s,t)$ and $\bq(s,t)$ form a general analytical solution to~\eqref{ke1}--\eqref{ke2}.
\end{proposition}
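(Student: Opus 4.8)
The plan is to prove generality by exploiting the triangular structure of the ansatz: by Proposition~\ref{pro:1} the pair \eqref{omega}--\eqref{kappa} already exhausts the solutions of \eqref{ke1} alone, while \eqref{solke2} solves \eqref{ke2} once $\bk$ and $\bw$ are regarded as given. So let $(\bw,\bk,\bnu,\bv)$ be an \emph{arbitrary} solution of \eqref{ke1}--\eqref{ke2} that is analytic at a point $(s_0,t_0)$ at which the Jacobian $J(\bp)$ of \eqref{Jacobian} can be made nonzero — this is exactly the freedom used in the proof of Proposition~\ref{pro:1}. Since the pair $(\bw,\bk)$ then satisfies \eqref{ke1}, Proposition~\ref{pro:1} supplies an analytic vector function $\bp(s,t)$ near $(s_0,t_0)$ for which \eqref{omega}--\eqref{kappa} reproduce $\bw$ and $\bk$ exactly. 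I fix this $\bp$; it accounts for the first two components of the claimed solution, and from now on $\bk,\bw$ in \eqref{solke2} denote these fixed functions.

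It remains to produce an analytic $\bq(s,t)$ with $\bnu=\bq\times\bk-\bq_s$ and $\bv=\bq\times\bw-\bq_t$. Reading these as a first-order system solved for the first derivatives of $\bq$,
\begin{equation*}
  \bq_s=\bq\times\bk-\bnu\,,\qquad \bq_t=\bq\times\bw-\bv\,,
\end{equation*}
its right-hand sides are polynomial in $\bq$ and analytic in $(s,t)$, so it is automatically in orthonomic form. As in the proof of Proposition~\ref{pro:1}, Riquier's existence theorem then yields a unique analytic solution $\bq(s,t)$ near $(s_0,t_0)$ with any prescribed value $\bq(s_0,t_0)$ (say $\bq(s_0,t_0)=\vec{0}$), provided the single vector passivity (integrability) condition $\partial_t\bq_s=\partial_s\bq_t$ is an algebraic consequence of the system together with \eqref{ke1}--\eqref{ke2}.

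Checking that passivity condition is the crux, and it is the only computation in the argument. Substituting $\bq_s$ and $\bq_t$ from the system into $\partial_t(\bq\times\bk-\bnu)-\partial_s(\bq\times\bw-\bv)=\vec{0}$ and expanding, the terms not involving $\bq$ collapse to $(\bk\times\bv-\bw\times\bnu)-(\bnu_t-\bv_s)$, which vanishes by \eqref{ke2}; the terms involving $\bq$ reduce to $(\bq\times\bw)\times\bk-(\bq\times\bk)\times\bw+\bq\times(\bk_t-\bw_s)$, and after inserting $\bk_t-\bw_s=-\bw\times\bk$ from \eqref{ke1} this is $(\bq\times\bw)\times\bk-(\bq\times\bk)\times\bw-\bq\times(\bw\times\bk)$, which is identically zero by the Jacobi identity (equivalently, by a direct BAC--CAB expansion of the three double cross products). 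Hence the passivity condition holds on every solution of \eqref{ke1}--\eqref{ke2}, the required analytic $\bq$ exists, and \eqref{solke2} reproduces the given $\bnu$ and $\bv$; together with the first stage this shows the ansatz captures an arbitrary analytic solution, which is the asserted generality. I do not expect any genuine obstacle beyond this bookkeeping — the one point that must be handled carefully is that the first stage be carried out at a point where $J(\bp)\neq0$, so that Proposition~\ref{pro:1} is applicable and the resulting solution is analytic there.
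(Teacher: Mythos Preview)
Your argument is correct and follows the same architecture as the paper's own proof: invoke Proposition~\ref{pro:1} to obtain $\bp$, recast \eqref{solke2} as a first-order system for $\bq$, verify the single passivity condition, and conclude by Riquier's existence theorem. The only notable difference is that where the paper appeals to the swap symmetry $\bnu\leftrightarrow\bv$, $\bk\leftrightarrow\bw$, $\partial_s\leftrightarrow\partial_t$ to assert that the integrability condition holds, you carry out the computation explicitly and close it with the Jacobi identity --- this is in fact the cleaner justification, since the symmetry alone does not obviously force the cross-derivative obstruction to vanish.
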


\begin{proof}
The fact that~\eqref{omega} and~\eqref{kappa} form a general analytical solution to~\eqref{ke1} was verified in Proposition \ref{pro:1}.

We have to show that, given analytical vector functions $\bnu(s,t)$ and $\bv(s,t)$ satisfying~\eqref{ke2} with analytical $\bw(s,t)$ and $\bk(s,t)$ satisfying~\eqref{ke1}, there exists an analytical vector function $\bq(s,t)$ satisfying~\eqref{solke2}. Consider the last equalities as a system of first-order PDEs with independent variables $(s,t)$ and a dependent vector variable $\bq$. According to the argumentation in the proof of Proposition \ref{pro:1}, this leads to the fact, that the equations in \eqref{solke2} are invariant under the transformations
\begin{equation*}
   \bnu(s,t) \Leftrightarrow \bv(s,t)\,,\quad \bw(s,t) \Leftrightarrow \bk(s,t)\,,\quad \partial_s \Leftrightarrow \partial_t\,.
\end{equation*}
This symmetry implies the satisfiability of the integrability condition
\[
\partial_t(\bq\times\bk-\bnu)-\partial_s(\bq\times \bw-\bv)=0
\]
without any further constraints. Therefore, the system~\eqref{solke2} is passive (involutive), and by Riquier's existence theorem, there is a solution $\bq$ to \eqref{solke2} analytical in a point of analyticity of $\bw,\,\bk,\,\bnu,\,\bv$.\\\phantom{}\hfill$\Box$
\end{proof}

\section{Simulation of Two-Way Coupled Fluid-Rod Problems}
\label{sec:5}

To demonstrate the practical use of the analytical solution to the kinematic Cosserat equations, we combine it with the numerical solution of its dynamical part. The resulting analytical solutions \eqref{omega}--\eqref{kappa} and~\eqref{solke2} for the kinematic part of \refeq{sct} contain two parameterization functions $\bp(s,t)$ and $\bq(s,t)$, which can be determined by the numerical integration of the dynamical part of \refeq{sct}. The substitution of the resulting analytical solutions \eqref{omega}--\eqref{kappa} and~\eqref{solke2} into the latter two (dynamical) equations of \refeq{sct}, the replacement of the spatial derivatives with central differences, and the replacement of the temporal derivatives according to the numerical scheme of a forward Euler integrator, leads to an explicit expression.\footnote{We do not explicitly write out the resulting equations here for brevity. A construction of a hybrid semi-analytical, semi-numerical solver is also described in our recent contribution \cite{MLGSW:2015}.} Iterating over this recurrence equation allows for the simulation of the dynamics of a rod.

In order to embed this into a scenario close to reality, we consider a flagellated microswimmer. In particular, we simulate the dynamics of a swimming sperm cell, which is of interest in the context of simulations in biology and biophysics. Since such a highly viscous fluid scenario takes place in the low Reynolds number domain, the advection and pressure parts of the Navier-Stokes equations (cf.~\cite{Granger:1995}) can be ignored, such that the resulting so-called {\em steady Stokes equations} become linear and can be solved analytically. Therefore, numerical errors do not significantly influence the fluid simulation part for which reason this scenario is appropriate for evaluating the practicability of the analytical solution to the kinematic Cosserat equations. The {\em steady Stokes equations} are given by
\begin{eqnarray}
\mu\Delta\vec{u} = \nabla p-\vec{F}\,,\label{eq:Stokes1}\\
\nabla\cdot\vec{u}=0\,,\label{eq:Stokes2}
\end{eqnarray}
in which $\mu$ denotes the fluid viscosity, $p$ the pressure, $\vec{u}$ the velocity, and $\vec{F}$ the force. Similar to the fundamental work in \cite{Cortez:2008} we use a regularization in order to develop a suitable integration of \refeqeq{eq:Stokes1}{eq:Stokes2}. For that, we assume$$\vec{F}(\vec{x})=\vec{f}_0\,\phi_\epsilon(\vec{x}-\vec{x}_0),$$in which $\phi_\epsilon$ is a smooth and radially symmetric function with $\int\phi_\epsilon(\vec{x})\,\diff{\vec{x}}=1$, is spread over a small ball centered at the point $\vec{x}_0$.

Let $G_\epsilon$ be the corresponding Green's function, i.e., the solution of $\Delta G_\epsilon(\vec{x})=\phi_\epsilon(\vec{x})$ and let $B_\epsilon$ be the solution of $\Delta B_\epsilon(\vec{x})=G_\epsilon(\vec{x})$, both in the infinite space bounded for small $\epsilon$. Smooth approximations of $G_\epsilon$ and $B_\epsilon$ are given by $G(\vec{x})=-1/(4\pi\norm{\vec{x}})$ for $\norm{\vec{x}}>0$ and $B(\vec{x})=-\norm{\vec{x}}/(8\pi)$, the solution of the biharmonic equation $\Delta^2B(\vec{x})=\delta(\vec{x})$.

\noindent
The pressure $p$ satisfies $\Delta p=\nabla\cdot\vec{F}$, which can be shown by applying the divergence operator on \refeqeq{eq:Stokes1}{eq:Stokes2}, and is therefore given by $p=\vec{f}_0\cdot\nabla G_\epsilon$. Using this, we can rewrite \refeq{eq:Stokes1} as $$\mu\Delta\vec{u}=(\vec{f}_0\cdot\nabla)\nabla G_\epsilon-\vec{f}_0\phi_\epsilon$$ with its solution $$\mu\vec{u}(\vec{x})=(\vec{f}_0\cdot\nabla)\nabla B_\epsilon(\vec{x}-\vec{x}_0)-\vec{f}_0G_\epsilon(\vec{x}-\vec{x}_0)\,,$$ the so-called {\em regularized Stokeslet}.

For multiple forces $\vec{f}_1,\dots,\vec{f}_N$ centered at points $\vec{x}_1,\dots,\vec{x}_N$, the pressure $p$ and the velocity $\vec{u}$ can be computed by superposition. Because $G_\epsilon$ and $B_\epsilon$ are radially symmetric, we can additionally use $\nabla B_\epsilon(\vec{x})=B_\epsilon'\vec{x}/\norm{\vec{x}}$ and obtain\footnote{Since at this point, the functions $\phi_\epsilon$, $G_\epsilon$, and $B_\epsilon$ only depend on the norm of their arguments, we change the notation according to this.}
\begin{eqnarray}
p(\vec{x}) &=& \sum_{k=1}^N(\vec{f}_k\cdot(\vec{x}-\vec{x}_k))\frac{G_\epsilon'(\norm{\vec{x}-\vec{x}_k})}{\norm{\vec{x}-\vec{x}_k}}\,,\label{eq:StokesSolution1}\\
\vec{u}(\vec{x}) &=& \frac{1}{\mu}\sum_{k=1}^N\left[\vec{f}_k\left(\frac{B_\epsilon'(\norm{\vec{x}-\vec{x}_k})}{\norm{\vec{x}-\vec{x}_k}}-G_\epsilon(\norm{\vec{x}-\vec{x}_k})\right)\right.\label{eq:StokesSolution2}\\
&& +\left.(\vec{f}_k\cdot(\vec{x}-\vec{x}_k))(\vec{x}-\vec{x}_k)\frac{\norm{\vec{x}-\vec{x}_k}B_\epsilon''(\norm{\vec{x}-\vec{x}_k})-B_\epsilon'(\norm{\vec{x}-\vec{x}_k})}{\norm{\vec{x}-\vec{x}_k}^3}\right]\,.\nonumber
\end{eqnarray}
The flow given by \refeq{eq:StokesSolution2} satisfies the incompressibility constraint \refeq{eq:Stokes2}. Because of $$\Delta G_\epsilon(\norm{\vec{x}-\vec{x}_k})=\frac{1}{\norm{\vec{x}-\vec{x}_k}}(\norm{\vec{x}-\vec{x}_k}G_\epsilon'(\norm{\vec{x}-\vec{x}_k}))'=\phi_\epsilon(\norm{\vec{x}-\vec{x}_k})\,,$$ the integration of $$G_\epsilon'(\norm{\vec{x}-\vec{x}_k})=\frac{1}{\norm{\vec{x}-\vec{x}_k}}\int_0^{\norm{\vec{x}-\vec{x}_k}}s\phi_\epsilon(s)\,\diff{s}$$ leads to $G_\epsilon$. Similarly, $$\frac{1}{\norm{\vec{x}-\vec{x}_k}}(\norm{\vec{x}-\vec{x}_k}B_\epsilon'(\norm{\vec{x}-\vec{x}_k}))'=G_\epsilon(\norm{\vec{x}-\vec{x}_k})$$ leads to the expression $$B_\epsilon'(\norm{\vec{x}-\vec{x}_k})=\frac{1}{\norm{\vec{x}-\vec{x}_k}}\int_0^{\norm{\vec{x}-\vec{x}_k}}sG_\epsilon(s)\,\diff{s}$$ to determine $B_\epsilon$. We make use of the specific function $$\phi_\epsilon(\norm{\vec{x}})=\frac{15\epsilon^4}{8\pi(\norm{\vec{x}}^2+\epsilon^2)^{7/2}}\,,$$ which is smooth and radially symmetric.

Up to now, this regularized Stokeslet \refeqeq{eq:StokesSolution1}{eq:StokesSolution2} allows for the computation of the velocities for given forces. Similarly, we can tread the application of a torque by deriving an analogous {\em regularized Rodlet}; see e.g.~\cite{Ainley:2008}. In the inverse case, the velocity expressions can be rewritten in the form of the equations $$\vec{u}(\vec{x}_i)=\sum_{j=1}^NM_{ij}(\vec{x}_1,\dots,\vec{x}_N)\vec{f}_j$$ for $i\in\{1,\dots,N\}$ which can be transformed into an equation system $\vec{\mathsf{U}}=\mat{M}\,\vec{\mathsf{F}}$ with a $(3N\times3N)$-matrix $\mat{M}:=(M_{ij})_{i,j\in\{1,\dots,N\}}$. Since in general $\mat{M}$ is not regular, an iterative solver have to be applied.

A flagellated microswimmer can be set up by a rod representing the centerline of the flagellum; see \cite{Elgeti:2015}. Additionally, a constant torque perpendicular to the flagellum's base is applied to emulate the rotation of the motor. From forces and torque the velocity field is determined. Repeating this procedure to update the system state iteratively introduces a temporal domain and allows for the dynamical simulation of flagellated microswimmers; see Figures~\ref{fig:Bacteria} and \ref{fig:SpermCell}. Compared to a purely numerical handling of the two-way coupled fluid-rod system, the step size can be increased by four to five orders of magnitude, which leads to an acceleration of four orders of magnitude. This allows for real-time simulations of flagellated microswimmers on a standard desktop computer.\footnote{The simulations illustrated in Figures~\ref{fig:Bacteria} and \ref{fig:SpermCell} can be carried out in real-time on a machine with an Intel(R) Xeon E5 with 3.5 GHz and 32 GB DDR-RAM.}

\begin{figure}
\centering
\includegraphics[width=1.0\textwidth]{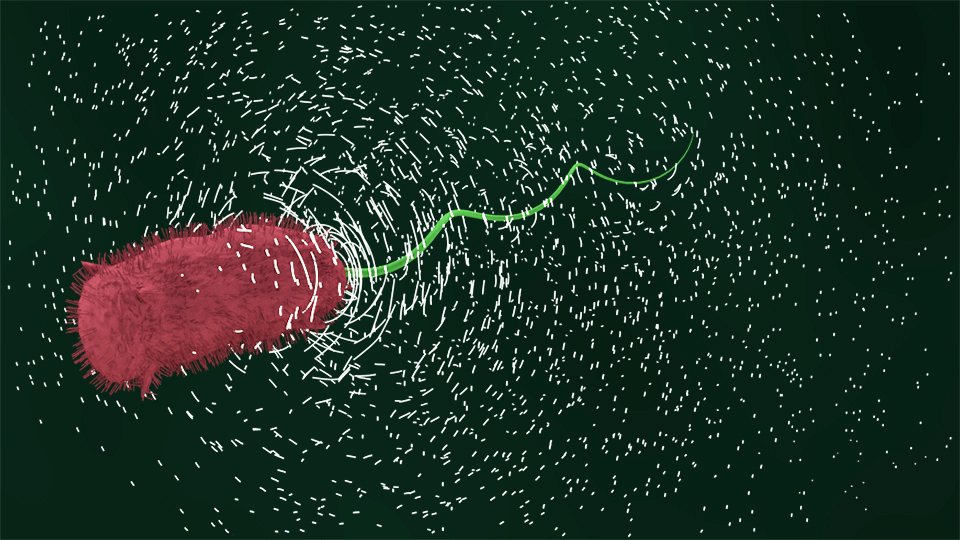}\\
\caption{Simulation of a monotrichous bacteria swimming in a viscous fluid. The rotation of the motor located at the back side of the bacteria's head causes the characteristic motion of the flagellum leading to a movement of the bacteria.}
\label{fig:Bacteria}
\end{figure}

\begin{figure}
\centering
\includegraphics[width=1.0\textwidth]{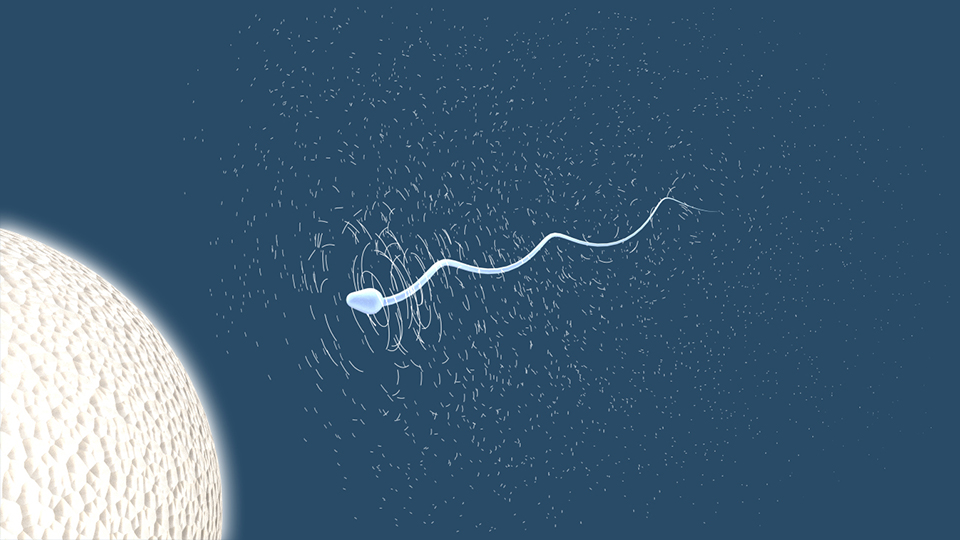}
\caption{Simulation of a sperm cell swimming into the direction of an egg. The concentration gradient induced by the egg is linearly coupled with the control of the motor. In contrast to the bacteria in \reffig{fig:Bacteria}, the flagellum of a sperm cell does not have its motor at its base as simulated here. Instead several motors are distributed along the flagellum (cf.~\cite{Riedel-Kruse:2007}), for which reason this simulation is not fully biologically accurate, but still illustrates the capabilities of the presented approach.}
\label{fig:SpermCell}
\end{figure}

\section{Conclusison}
\label{sec:6}

We constructed a closed form solution to the kinematic equations~\eqref{ke1}--\eqref{ke2} of the governing Cosserat PDE system~\eqref{sct} and proved its generality. The kinematic equations are parameter free whereas the dynamical Cosserat PDEs contain a number of parameters and parametric functions characterizing the rod under consideration of external forces and torques. The solution we found depends on two arbitrary analytical vector functions and is analytical everywhere except at the values of the independent variables $(s,t)$ for which the right-hand side of \eqref{Jacobian} vanishes. Therefore, the hardness of the numerical integration of the Cosserat system, in particular its stiffness, is substantially reduced by using the exact solution to the kinematic equations.

The application of the analytical solution prevents from numerical instabilities and allows for highly accurate and efficient simulations. This was demonstrated for the two-way coupled fluid-rod scenario of flagellated microswimmers, which could efficiently be simulated with an acceleration of four orders of magnitude compared to a purely numerical handling. This clearly shows the usefulness of the constructed analytical solution of the kinematic equations.

\section{Limitations}
\label{sec:7}

Because of the presence of parameters in the dynamical part of the Cosserat PDEs, the construction of a general closed form solution to this part is hopeless. Even if one specifies all parameters and considers the parametric functions as numerical constants, the exact integration of the dynamical equations is hardly possible. We analyzed Lie symmetries of the kinematic equations extended with one of the dynamical vector equations including all specifications of all parameters and without parametric functions. While the determining equations can be generated in a reasonable time, their completion to involution seems to be practically impossible.

\section*{Acknowledgements}

This work has been partially supported by the Max Planck Center for Visual Computing and Communication funded by the Federal Ministry of Education and Research of the Federal Republic of Germany (FKZ-01IMC01/FKZ-01IM10001), the Russian Foundation for Basic Research (16-01-00080), and a BioX Stanford Interdisciplinary Graduate Fellowship. The reviewers' valuable comments that improved the manuscript are gratefully acknowledged.


\end{document}